\documentclass{article}
\usepackage[]{amsthm,amsmath,amsfonts,amstext,amssymb}
\usepackage[dvips]{graphicx}

\newtheorem{theorem}{Theorem}
\newtheorem{lemma}{Lemma}

\begin{document}
\title{$m_3^3$-Convex Geometries are $A$-free.}
\author{J. C\'{a}ceres, Dept. of Stats. and Applied Math.\\ Univ. of Almeria, 04120, Almeria
Spain\\ e-mail: jcaceres@ual.es
\and O.R. Oellermann\thanks{Supported by an NSERC grant CANADA.},
 Dept. of Math. and Stats., Univ. of Winnipeg, \\
 515 Portage Ave, Winnipeg, R3B 2E9, Canada\\
 e-mail: o.oellermann@uwinnipeg.ca
\and M.L. Puertas, Dept. of Stats. and Applied Math.,\\ Univ. of Almeria, 04120, Almeria, Spain \\
e-mail: mpuertas@ual.es}
\maketitle

\begin{abstract}
Let $V$ be a finite set and $\cal{M}$ a collection of subsets of
$V$. Then $\cal{M}$ is an alignment of $V$ if and only if
$\cal{M}$ is closed under taking intersections and contains both
$V$ and the empty set. If $\cal{M}$ is an alignment of $V$, then
the elements of $\cal{M}$ are called convex sets and the pair $(V,
\cal{M})$ is called an aligned space. If $S \subseteq V$, then the
convex hull of $S$ is the smallest convex set that contains $S$.
Suppose $X \in \cal{M}$. Then $x \in X$ is an extreme point for
$X$ if $X \setminus \{x\} \in \cal{M}$. The collection of all
extreme points of $X$ is denoted by $ex(X)$. A convex geometry on
a finite set is an aligned space with the additional property that
every convex set is the convex hull of its extreme points. Let $G=(V,E)$
be a connected graph and $U$ a set of vertices of $G$. A subgraph $T$ of $G$ containing $U$ is a minimal $U$-tree if $T$ is a tree and if every vertex of $V(T)\setminus U$ is a cut-vertex of the subgraph induced by $V(T)$. The monophonic interval of $U$ is the collection of all vertices of $G$ that belong to some minimal $U$-tree. A set $S$ of vertices in a graph is $m_k$-convex if it contains the monophonic interval of every $k$-set of vertices is $S$.  A set of vertices $S$ of a graph is $m^3$-convex if for every pair $u,v$ of vertices in $S$, the vertices on every induced path of length at least 3 are contained in $S$. A set $S$ is $m_3^3$-convex if it is both $m_3$- and $m^3$- convex. We show that if the $m_3^3$-convex sets form a convex geometry, then $G$ is $A$-free.
\smallskip

\noindent{\bf Key Words:} minimal trees,
monophonic intervals of sets, $k$-monophonic convexity, convex geometries\\
\noindent {\bf AMS subject classification:} 05C75, 05C12, 05C17
\end{abstract}

\section{Introduction}

Let $G$ and $F$ be graphs.
Then $F$ is an {\em induced subgraph} of $G$ if $F$ is a subgraph
of $G$ and for every $u, v \in V(F)$, $uv \in E(F)$ if and only if
$uv \in E(G)$. We say a graph $G$ is $F$-{\em free} if it does not
contain $F$ as an induced subgraph. Suppose $\cal{C}$ is a
collection of graphs. Then $G$ is $\cal{C}$-free if $G$ is
$F$-free for every $F \in \cal{C}$. If $F$ is a path or cycle that is a subgraph of $G$, then $F$ has a {\em chord} if it is not
an induced subgraph of $G$, i.e., $F$ has two vertices that are
adjacent in $G$ but not in $F$. An induced cycle of length at
least $5$ is called a {\em hole}.

Let $V$ be a finite set and $\cal{M}$ a collection of subsets of
$V$. Then $\cal{M}$ is an {\em alignment} of $V$ if and only if
$\cal{M}$ is closed under taking intersections and contains both
$V$ and the empty set. If $\cal{M}$ is an alignment of $V$, then
the elements of $\cal{M}$ are called {\em convex sets} and the
pair $(V, \cal{M})$ is called an {\em aligned space}. If $S
\subseteq V$, then the {\em convex hull} of $S$ is the smallest
convex set that contains $S$. Suppose $X \in \cal{M}$. Then $x \in
X$ is an {\em extreme point} for $X$ if $X \setminus \{x\} \in
\cal{M}$. The collection of all extreme points of $X$ is denoted
by $ex(X)$. A {\em convex geometry} on a finite set $V$ is an aligned
space $(V, \cal{M})$ with the additional property that every convex set is the
convex hull of its extreme points. This property is referred to as
the {\em Minkowski-Krein-Milman }  ($MKM$) property. For a
more extensive overview of other abstract convex structures see
\cite{V}. Convexities associated with the vertex set of a graph are
discussed for example in \cite{BLS}. Their study is of interest in
Computational Geometry and has applications in Game Theory \cite{BE}.

Convexities on the vertex set of a graph are usually defined
in terms of  some type of `intervals'. Suppose $G$ is a connected
graph and $u,v$ two vertices of $G$.
Then a $u-v$ {\em geodesic} is a shortest $u-v$ path in $G$.
Such geodesics are necessarily induced paths. However, not all
induced paths are geodesics. The $g$-{\em interval} (respectively,
$m$-{\em interval}) between a pair $u, v$ of vertices in a graph
$G$ is the collection of all vertices that lie on some $u-v$
geodesic (respectively, induced $u-v$ path) in $G$ and is denoted by $I_g[u, v]$
(respectively, $I_m[u, v]$).

A subset $S$ of vertices of a graph is said to be $g$-{\em convex}
($m$-{\em convex}) if it contains the $g$-interval ($m$-interval)
between every pair of vertices in $S$. It is not difficult to see
that the collection of all $g$-convex ($m$-convex) sets is an
alignment of $V$.  A vertex $v$ is an extreme point for a $g$-convex (or
$m$-convex) set $S$ if and only if $v$ is simplicial in the
subgraph induced by $S$, i.e., every two neighbours of $v$ in $S$
are adjacent. Of course the convex hull of the extreme
points of a convex set $S$ is contained in $S$, but equality holds
only in special cases. In \cite{FJ} those graphs for which the
$g$-convex sets form a convex geometry are characterized as the
chordal $3$-fan-free graphs(see Fig.~\ref{Specialgraphs}). These
are precisely the chordal, distance-hereditary graphs (see \cite{BM, H}).
In the same paper it is shown that the chordal graphs are precisely
those graphs for which the $m$-convex sets form a convex geometry.

For what follows we use $P_k$ to denote an induced path of order
$k$. A vertex is simplicial in a set $S$ of vertices if and only
if it is not the centre vertex of an induced $P_3$ in $\langle S \rangle$.
Jamison and Olariu \cite{JO} relaxed this condition. They defined
a vertex to be {\em semisimplicial} in $S$ if and only if it is
not a centre vertex of an induced $P_4$ in $\langle S \rangle$.

\begin{figure}[htbp]
\center\includegraphics*{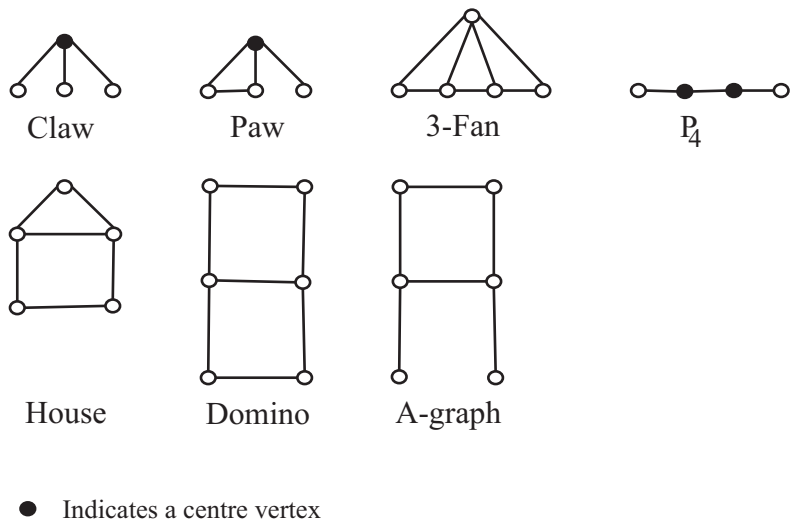} \caption{Special
Graphs }\label{Specialgraphs}
\end{figure}

Dragan, Nicolai and Brandst\"{a}dt \cite{DNB} introduced another
convexity notion that relies on induced paths. The $m^3$-{\em
interval} between a pair $u,v$ of vertices in a graph $G$, denoted
by $I_{m^3}[u,v]$, is the collection of all vertices of $G$ that
belong to an induced $u-v$ path of length at least $3$. Let $G$ be
a graph with vertex set $V$. A set $S \subseteq V$ is $m^3$-{\em
convex} if and only if for every pair $u, v$ of vertices of $S$
the vertices of the $m^3$-interval between $u$ and $v$ belong to
$S$. As in the other cases the collection of all
$m^3$-convex sets is an alignment. Note that an $m^3$-convex set
is not necessarily connected. It is shown in \cite{DNB} that the extreme points
of an $m^3$-convex set are precisely the semisimplicial vertices
of $\langle S \rangle$. Moreover, those graphs for which the
$m^3$-convex sets form a convex geometry are characterized in \cite{DNB} as
the (house, hole, domino, $A$)-free graphs (see Fig. \ref{Specialgraphs}).

More recently a graph convexity
that generalizes $g$-convexity was introduced (see \cite{O}).
The {\em Steiner interval} of a set $S$ of vertices in a connected
graph $G$, denoted by $I(S)$, is the union of all vertices of $G$
that lie on some {\em Steiner tree} for $S$, i.e., a connected
subgraph that contains $S$ and has the minimum number of edges among
all such subgraphs. Steiner intervals have been studied for example
in \cite{KKO, OP}. A set $S$ of vertices in a graph $G$ is $k$-{\em Steiner convex}
($g_k$-convex) if the Steiner interval of every collection of $k$
vertices of $S$ is contained in $S$. Thus $S$ is $g_2$-convex if
and only if it is $g$-convex. The collection of $g_k$-convex sets
forms an aligned space. We call an extreme point of a $g_k$-convex set a $k$-{\em Steiner simplicial} vertex, abbreviated $kSS$ vertex.

The extreme points of $g_3$-convex sets $S$, i.e., the $3SS$ vertices are characterized in \cite{CO} as those vertices that are {\bf not} a centre vertex of an induced claw, paw or $P_4$, in
$\langle S \rangle$ see Fig.~\ref{Specialgraphs}. Thus a $3SS$ vertex is semisimplicial. Apart from the $g_k$-convexity, for a fixed $k$, other graph convexities that (i) depend on more than one value of $k$ and (ii) combine the $g_3$ convexity and the geodesic counterpart of the $m^3$-convexity were introduced and studied in \cite{NO}. In particular characterizations of convex geometries for several of these graph convexities are given.

The notion of an induced path between a pair of vertices can be extended to three or more vertices. This gives rise to graph convexities that extend the $m$-convexity. Let $U$ be a set of at least two vertices in a connected graph $G$. A subgraph $H$ containing $U$ is a {\em minimal} $U$-{\em tree} if $H$ is a tree and if every vertex $v \in V(H) \setminus U$ is a cut-vertex of $\langle V(H) \rangle$. Thus if $U = \{u,v\}$, then a minimal $U$-tree is just an induced $u-v$ path. Moreover, every Steiner tree for a set $U$ of vertices is a minimal $U$-tree. The collection of all vertices that belong to some minimal $U$-tree is called the {\em monophonic interval of} $U$ and is denoted by $I_{m}(U)$. A set $S$ of vertices is $k$-{\em monophonic convex}, abbreviated as $m_k$-convex, if it contains the monophonic interval of every subset $U$ of $k$ vertices of $S$. Thus a set of vertices in $G$ is a monophonic convex set if and only if it is a $m_2$-convex set.
By combining the $m_3$- convexity with the $m^3$-convexity introduced in \cite{DNB}, we obtain a graph convexity that extends the graph convexity studied in \cite{NO}. More specifically we define a set $S$ of vertices in a connected graph to be $m^3_3$-{\em convex} if $S$ is both $m^3$- and $m_3$-convex.  In this paper we show  that if the $m^3_3$-convex alignment forms a convex geometry then $G$ is $A$-free.
We use the fact that these graphs are $F$-free for several other graphs $F$. In particular $G$ is easily seen to be house, hole, and domino free. Moreover the graphs of Fig. \ref{g^{(3,2)}convexgeometriesforbidden} are forbidden.  A graph $G$ is a {\em replicated twin} $C_4$ if it is isomorphic
to any one of the four graphs shown in
Fig.~\ref{g^{(3,2)}convexgeometriesforbidden}(a), where any subset
of the dashed edges may belong to $G$. The collection of the four
replicated twin $C_4$ graphs is denoted by $\cal{R}$$_{C_4}$. A graph $F$ is a {\em tailed twin} $C_4$ if it is isomorphic to one
of the two graphs shown in Fig.
\ref{g^{(3,2)}convexgeometriesforbidden}(b) where again any subset
of the dotted edges may be chosen to belong to $F$. We denote the
collection of tailed twin $C_4$'s by $\cal{T}$$_{C_4}$.

\begin{figure}[htbp]
\center\includegraphics*{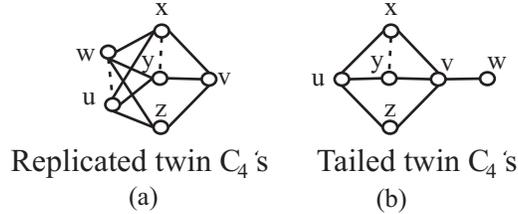}
\caption{Forbidden subgraphs for $m^3_3$-convex geometries}
\label{g^{(3,2)}convexgeometriesforbidden}
\end{figure}

\section{$m^3_3$-Convex Geometries are $A$-Free}
Recall that the graphs for which the $m^3$-convex sets form a
convex geometry are characterized in \cite{DNB} as the (house, hole, domino, $A$)-free graphs. The proof of this characterization depends on the following useful result also proven in \cite{DNB}:

\begin{theorem}\label{m^3closure}
If $G$ is a (house, hole, domino, $A$)-free graph, then every vertex of $G$ is either
semisimplicial or lies on an induced path of length at least $3$
between two semisimplicial vertices.
\end{theorem}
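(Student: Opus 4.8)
The statement is immediate for semisimplicial vertices, so the plan is to fix a vertex $v$ that is \emph{not} semisimplicial and to produce an induced path of length at least $3$ that has $v$ as an \emph{internal} vertex and both of whose endpoints are semisimplicial. Since $v$ is not semisimplicial, it is a centre vertex of some induced $P_4$, which after relabelling we write as $a - v - c - d$; this is already an induced path of length $3$ with $v$ internal, so the family $\mathcal{P}$ of induced paths of length at least $3$ having $v$ as an interior vertex is nonempty. As $G$ is finite, I would choose $P = a_0 - a_1 - \cdots - a_r \in \mathcal{P}$ of maximum length, with $v = a_i$ for some $0 < i < r$. I claim both endpoints $a_0$ and $a_r$ are semisimplicial; granting this, $P$ is the required path.

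The engine of the argument is the following \textbf{Extension Lemma}, to be applied at each endpoint: if $Q = x_0 - x_1 - \cdots - x_s$ is an induced path with $s \ge 3$ and $x_0$ is not semisimplicial, then there is a vertex $y \notin V(Q)$ with $y \sim x_0$ and $y \not\sim x_i$ for all $i \ge 1$, so that $y - x_0 - x_1 - \cdots - x_s$ is again an induced path. Granting the lemma, if $a_0$ were not semisimplicial we could prepend such a vertex to $P$; since $a_0 \ne v$, this keeps $v$ internal and strictly increases the length, producing a member of $\mathcal{P}$ longer than $P$ and contradicting maximality. Hence $a_0$, and by symmetry $a_r$, are semisimplicial, and the theorem follows.

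To prove the Extension Lemma I would start from an induced $P_4$ witnessing that $x_0$ is a centre, say $b - x_0 - c - d$ with $b \not\sim c$, $b \not\sim d$, $c \sim d$, and $x_0 \not\sim d$. The vertex $b$ is the natural one to prepend, and this works at once whenever $b \notin V(Q)$ and $b$ has no neighbour among $x_1, \ldots, x_s$. The remaining configurations are organized by where $b, c, d$ meet $Q$ — in particular by the smallest index $j$ with $b \sim x_j$, and likewise for $c$. A chord $b \sim x_j$ creates a cycle $b - x_0 - x_1 - \cdots - x_j - b$ whose length is pinned down by hole-freeness, so that after accounting for secondary chords one reduces to a short cycle $C_4$ on $\{x_0, x_1, x_2, b\}$ (or its analogue through $c$). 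The pendant vertex $d$, together with $c$ and these cycles, must then embed without creating a house, a domino, or the graph $A$; the exclusion of each of these forces a neighbour of $x_0$ that is non-adjacent to all of $x_1, \ldots, x_s$, which is the vertex we prepend. Typically the vertex we adjoin is one of $b, c, d$.

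The main obstacle is precisely this case analysis: the four excluded subgraphs are each responsible for ruling out a distinct obstruction to the extension, and the role of $A$ is the most delicate one, surfacing when $b$, $c$, and $d$ attach to two consecutive path vertices in a pattern consistent with house-, hole-, and domino-freeness yet forbidden once $A$ is excluded. I expect the bookkeeping — tracking which of $b, c, d$ coincide with, or are adjacent to, which $x_j$, and which secondary chords are allowed — to constitute the bulk of the work, whereas the extremal longest-path framing keeps the surrounding structure short and mechanical.
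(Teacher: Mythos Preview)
The paper does not prove this theorem at all: Theorem~1 is quoted from \cite{DNB} and used as an external input, so there is no in-paper proof to compare your attempt against.

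Assessing the proposal on its own merits: your maximal-induced-path framework is the standard and correct reduction, and it is in the spirit of how the result is obtained in \cite{DNB}. Once the Extension Lemma holds, the argument is immediate. The gap is that you have not proved the Extension Lemma---you have only described the shape of its case analysis. All of the mathematical content of the theorem lives there; the four forbidden subgraphs enter only in that analysis, and each one is genuinely needed to close off a branch. Saying that the bookkeeping ``constitutes the bulk of the work'' is accurate, but until it is actually carried out (at minimum the branch where $A$-freeness is invoked, which you correctly identify as the subtle one), what you have is a plan rather than a proof. One small technical caution: the Extension Lemma as you phrase it promises a \emph{single} vertex $y$ to prepend. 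In some configurations it can be cleaner to prove the slightly weaker conclusion that there is a strictly longer induced path still containing $x_1,\ldots,x_s$ (allowing a short detour at the $x_0$ end rather than a one-vertex prepend); this is just as good for the maximal-path argument, so be prepared to relax the lemma's statement if a branch resists the strong form.
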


In \cite{DNB} several `local' convexities related to the $m^3$-convexity were studied. For a set $S$ of vertices in a graph $G$, $N[S]$ is $S \cup N(S)$ where $N(S)$ is the collection of all vertices adjacent with some vertex of $S$. A set $S$ of vertices in a graph is connected if $\langle S \rangle$ is connected. The following useful result was established in \cite{DNB}.

\begin{theorem} \label{DNBlocal}
A graph G is (house, hole, domino)-free if and only if $N[S]$ is $m^3$-convex for all connected sets $S$ of vertices of $G$.
\end{theorem}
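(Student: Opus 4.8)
The plan is to prove the biconditional as two separate implications, the backward one being routine and the forward one carrying the real content.

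For the backward implication I would argue the contrapositive: if $G$ contains an induced house, hole, or domino, I exhibit a single \emph{connected} set $S$ (a vertex or an edge) for which $N[S]$ fails to be $m^3$-convex. For a hole $v_1 v_2 \cdots v_n v_1$ with $n \ge 5$, take $S = \{v_1\}$; since the cycle is induced, $N[v_1]$ meets it only in $v_1, v_2, v_n$, whereas the induced path $v_2 v_3 \cdots v_n$ has length $n-2 \ge 3$ and its interior vertices $v_3, \ldots, v_{n-1}$ all lie outside $N[v_1]$, so $N[\{v_1\}]$ is not $m^3$-convex. The house and the domino are handled the same way with $S$ an edge: for the domino viewed as two $4$-cycles sharing an edge, taking $S$ to be an edge incident with that shared edge leaves exactly one of the two far corners outside $N[S]$, and that corner is the centre of an induced $P_4$ whose ends lie in $N[S]$; the house is similar.

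For the forward implication I assume $G$ is (house, hole, domino)-free and suppose, for contradiction, that $N[S]$ is not $m^3$-convex for some connected $S$. Then there exist $u, v \in N[S]$ and an induced $u$--$v$ path $P = x_0 x_1 \cdots x_k$ with $k \ge 3$ that contains a vertex $w = x_i \notin N[S]$. I choose anchors $a, b \in S$ with $a \in \{u\} \cup N(u)$ and $b \in \{v\} \cup N(v)$, join them by a shortest (hence induced) path $R$ inside $\langle S \rangle$, and close $P$ through $R$ to obtain a cycle $C$ through $w$; in the generic case $u, v \notin S$ this cycle has length $k + |E(R)| + 2 \ge 5$, the cases $u \in S$ or $v \in S$ being boundary cases treated identically. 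The point of the construction is that $w \notin N[S]$ has no neighbour in $S \supseteq V(R)$, so $C$ has no chord incident with $w$; in fact every chord of $C$ joins a vertex of $R$ to either an end of $P$ or to a \emph{non-escaped} interior vertex of $P$. Here the connectedness of $S$ is exactly what makes $R$, and hence $C$, available.

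To finish I would pick the configuration minimizing the length of $C$. A chord from $u$ or $v$ to an interior vertex of $R$ lets me re-anchor along a proper subpath of $R$ and so contradicts minimality; hence no such chord survives. If no chord remains, $C$ is an induced cycle of length at least $5$, that is, a hole, contradicting the (house, hole, domino)-freeness of $G$. The main obstacle is the remaining chord type, where a non-escaped interior vertex $x_j$ of $P$ is adjacent to some $z_\ell \in V(R)$: here I would analyse the adjacencies of the $S$-vertex $z_\ell$ to the vertices of $P$ near $w$. For example, an $S$-vertex adjacent to $x_{i-1}$ and $x_{i+1}$ but not to $w$ yields an induced house on $\{x_{i-1}, w, x_{i+1}, x_{i+2}, z_\ell\}$ if it is also adjacent to $x_{i+2}$, and otherwise forces the configuration to extend along $P$ toward a domino or a longer hole. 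Showing that \emph{every} surviving chord pattern is driven into one of the three forbidden graphs, rather than stabilizing at a harmless induced $C_4$, is the delicate heart of the argument, and it is there that house- and domino-freeness (beyond mere hole-freeness) are genuinely used.
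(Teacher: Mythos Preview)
The paper does not actually prove this theorem. It is quoted verbatim as a result ``established in \cite{DNB}'' (Dragan, Nicolai and Brandst\"adt), and the paper only \emph{uses} it (repeatedly, inside the proofs of Lemmas~1--3) without reproving it. So there is no in-paper argument to compare your proposal against.

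As for your proposal on its own merits: the backward direction is fine in spirit, and in fact simpler than you suggest---for each of the house, the hole, and the domino a single vertex $S=\{v\}$ already works (e.g.\ for the house $abcde$ with roof edge $be$ take $S=\{a\}$: then $b,e\in N[a]$ while the induced path $bcde$ of length~$3$ has $c,d\notin N[a]$). Your forward direction is a reasonable outline (close $P$ through a geodesic $R$ in $\langle S\rangle$ to form a cycle $C$ with a chordless stretch around $w$, then minimise), but it is explicitly left incomplete: the case analysis you describe as ``the delicate heart of the argument'' is precisely where the proof lives, and the one sample you give is not yet right---an $S$-vertex adjacent to $x_{i-1},x_{i+1},x_{i+2}$ but not $w$ does not produce a house on $\{x_{i-1},w,x_{i+1},x_{i+2},z_\ell\}$, since $x_{i-1}x_{i+1}\notin E$ and $x_{i-1}z_\ell$ is not controlled. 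To turn this into a proof you would need to carry that analysis through carefully (or consult the original argument in Dragan--Nicolai--Brandst\"adt).
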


\medskip
\begin{theorem}\label{agraph}
If $G=(V,E)$ is a graph such that $(V, \cal{M}$$_{m^3_3}(G))$ is a convex geometry, then $G$ is $A$-free.
\end{theorem}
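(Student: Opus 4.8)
The plan is to use the standard characterization of convex geometries by the \emph{anti-exchange property}: a finite aligned space $(V,\mathcal{M})$ is a convex geometry if and only if for every convex set $C$ and every pair of distinct points $p,q\notin C$, whenever $q\in\langle C\cup\{p\}\rangle$ one has $p\notin\langle C\cup\{q\}\rangle$. I would argue by contradiction, assuming that $G$ contains an induced copy of $A$ (Fig.~\ref{Specialgraphs}) and producing a convex set together with two points that violate anti-exchange. Fix the labelling in which $A$ consists of a $4$-cycle $v_2v_3v_4v_6$ (edges $v_2v_3,v_3v_4,v_4v_6,v_6v_2$) together with a pendant vertex $v_1$ adjacent only to $v_2$ and a pendant vertex $v_5$ adjacent only to $v_3$. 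Since $\mathcal{M}_{m^3_3}(G)\subseteq\mathcal{M}_{m^3}(G)$, every extreme point of an $m^3_3$-convex set is in particular an extreme point of it as an $m^3$-convex set, hence semisimplicial in the induced subgraph \cite{DNB}; in $\langle V(A)\rangle$ the only semisimplicial vertices are the pendants $v_1,v_5$.

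The two vertices that will play the role of the exchange pair are the ``far'' cycle vertices $v_4$ and $v_6$. The key local observation is that $v_1v_2v_6v_4$ and $v_5v_3v_4v_6$ are induced paths of length $3$ in $A$, and, because $A$ is induced, they remain induced in $G$. Consequently $v_6\in I_{m^3}[v_1,v_4]$ and $v_4\in I_{m^3}[v_5,v_6]$ in $G$. Thus, if $C$ is any $m^3_3$-convex set with $v_1,v_5\in C$ and $v_4,v_6\notin C$, then $v_6\in\langle C\cup\{v_4\}\rangle\setminus C$ and simultaneously $v_4\in\langle C\cup\{v_6\}\rangle\setminus C$, which is exactly a violation of the anti-exchange property and contradicts the hypothesis. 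The whole proof therefore reduces to exhibiting one such convex set $C$; the natural candidate is $C=\langle\{v_1,v_5\}\rangle$ (the $m^3_3$-hull taken in $G$), and it suffices to show that this hull contains neither $v_4$ nor $v_6$.

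Computed inside $\langle V(A)\rangle$ this is immediate: the unique induced $v_1$--$v_5$ path of length at least $3$ is $v_1v_2v_3v_5$ (the path through the edge $v_2v_3$; the detour $v_1v_2v_6v_4v_3v_5$ has the chord $v_2v_3$ and is not induced), and no minimal tree on a pair or triple from $\{v_1,v_2,v_3,v_5\}$ can use $v_4$ or $v_6$ as an interior cut-vertex, since the edge $v_2v_3$ keeps the relevant induced subgraphs connected. Hence $\langle\{v_1,v_5\}\rangle\cap V(A)=\{v_1,v_2,v_3,v_5\}$.

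The main obstacle is that the hull in the ambient graph $G$ could, a priori, be enlarged by shortcut induced paths or minimal trees passing through vertices outside $A$, and these must be shown never to reach $v_4$ or $v_6$. This is where the structural hypotheses are used: $G$ is house-, hole-, and domino-free, so by Theorem~\ref{DNBlocal} closed neighbourhoods of connected sets are $m^3$-convex, which tightly controls how the $m^3$-closure can spread; and $G$ is free of the replicated and tailed twin $C_4$'s of Fig.~\ref{g^{(3,2)}convexgeometriesforbidden}. I would carry out a case analysis on the possible new vertices entering $\langle\{v_1,v_5\}\rangle$, showing that any induced path of length at least $3$ between two already-captured vertices, or any minimal tree on a captured triple, that introduced $v_4$ or $v_6$ would force one of the forbidden house/hole/domino or twin $C_4$ configurations, or would contradict Theorem~\ref{m^3closure} by placing the non-semisimplicial vertices $v_4,v_6$ off every induced path between semisimplicial vertices. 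Establishing that this closure stays disjoint from $\{v_4,v_6\}$ is the delicate heart of the argument; once it is done, the convex set $C=\langle\{v_1,v_5\}\rangle$ delivers the anti-exchange violation and the theorem follows.
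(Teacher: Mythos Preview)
Your overall plan is essentially the paper's: both arguments reduce to showing that the $m^3_3$-hull of the two pendant vertices of the induced $A$ fails to contain one (hence both) of the ``far'' cycle vertices. Whether you then cite the anti-exchange property or the Minkowski--Krein--Milman property is cosmetic. So the strategy is sound, but the execution is missing, and one of your suggested tools is circular.

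The gap is precisely the part you flag as ``the delicate heart of the argument.'' You propose an unspecified case analysis to keep $v_4,v_6$ out of $\langle\{v_1,v_5\}\rangle$; the paper spends three substantial lemmas on this. Concretely, it proves (Lemma~\ref{m-interval is m^3convex}) that $I_{m^3}(I_m[a,b])\subseteq I_m[a,b]$ and (Lemma~\ref{m-interval is m_3convex}) that $I_{m_3}(I_m[a,b])\subseteq I_m[a,b]$, so the ordinary monophonic interval $I_m[a,b]$ of the two pendants is already $m^3_3$-convex; then (Lemma~\ref{monophonic interval of leaves of A graph}) it shows $u_2\notin I_m[a,b]$ in any (house, hole, domino, $\mathcal{T}_{C_4}$)-free graph. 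That choice of candidate convex set --- $I_m[a,b]$ rather than the abstract hull --- is what makes the case analysis tractable, because you are reasoning about induced $a$--$b$ paths rather than an iterated closure. Your sketch gives no analogue of this reduction.

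Finally, your proposed appeal to Theorem~\ref{m^3closure} is illegitimate: that theorem has $A$-freeness among its hypotheses, so invoking it while assuming $G$ contains an induced $A$ is circular. Drop that reference, and replace the vague ``case analysis'' by the paper's route through $I_m[a,b]$ (or supply a genuine alternative), and your argument will go through.
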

\begin{proof} Observe first that $G$ is (house, hole, domino, $\cal{R}$$_{C_4}$, $\cal{T}$$_{C_4}$)-free.
Suppose $F$ is a house, hole, domino,
replicated twin $C_4$ or a tailed twin $C_4$. Then $F$ has at most one
$3SS$ vertex. Suppose $G$ is a graph that contains $F$ as an
induced subgraph. Then the set of extreme points of the convex
hull of $V(F)$ is contained in the collection of $3SS$ vertices of
$F$. So the convex hull of the extreme points of
the $m^3_3$-convex hull of $V(F)$ is empty or consists of a single
vertex. So in this case the $m^3_3$-convex alignment of $G$
does not form a convex geometry.

 If $S$ is a set of vertices of a graph $G$, then $I_{m^3}(S) = \cup\{I_{m^3}[x,y]|x,y \in S\}$.

 To show that $G$ contains no $A$ as an induced subgraph we prove a series of lemmas.

\begin{lemma} \label{m-interval is m^3convex}
Suppose $G=(V,E)$ is a graph for which $(V, \cal{M}$$_{m_3^3}(G))$ is a convex geometry. Then for every $a,b \in V$, $I_{m^3}(I_m[a,b]) \subseteq I_m[a,b]$.
\end{lemma}
\begin{proof}
By the above observation $G$ is (house, hole, domino, $\cal{R}$$_{C_4}$, $\cal{T}$$_{C_4}$)-free. If $ab \in E$ then $I_{m^3}(I_m[a,b]) \subseteq I_m[a,b]=\{a,b\}$. So we may assume $ab \not\in E$. If $I_{m^3}(I_m[a,b]) \not\subseteq I_m[a,b]$, there is a vertex $ w \not\in I_m[a,b]$ that lies on an induced path between two vertices of $I_m[a,b]$. Among all such induced paths of length at least $3$ containing $w$, let $Q$ be one with a minimum number of edges. Suppose $Q$ is a $u-v$ path. Clearly $\{u,v\} \neq \{a,b\}$; otherwise, $w \in I_m[a,b]$. Let $Q:(u=)v_1v_2 \ldots v_k(=v)$. (Suppose $w=v_i$.) Then $w$ is not adjacent with two non-adjacent vertices of any induced $a-b$ path; otherwise, $w$ lies on an induced $a-b$ path.

\noindent{\bf Case 1}  Suppose $u$ and $v$ lie on a common induced $a-b$ path $P$. We may assume $u$ precedes $v$ on such a path. Moreover, we may assume that all internal vertices of $Q$ are not on $P$. For if $v_j \in V(P)$, $1 < j <k$, then either $Q[v_1,v_j]$ or $Q[v_j, v_k]$ contains $w$, say the former. Since $Q$ is an induced path, so is $Q[v_1,v_j]$. Hence $v_1v_j \not\in E$. Thus $Q[v_1,v_j]$ must have length at least $3$; otherwise $w$ is adjacent with a pair of nonadjacent vertices of $P$, implying that $G$ contains an induced $a-b$ path passing through $w$, contrary to assumption. But then we have a contradiction to our choice of $Q$.

Let $S_1 = P[u,v] \setminus\{u,v\}$ and $S_2 = Q[u,v] \setminus \{u,v\}$. Then $\langle S_i \rangle $ is connected for $i=1,2$. By Theorem \ref{DNBlocal}, $N[S_i]$ is $m^3$-convex. Since $u$ and $v$ both belong to $N[S_i]$, every vertex of $Q$ must be adjacent with an internal vertex of $P[u,v]$. This is true in particular for  $w$. Since $P[a,u]$ followed by Q and then $P[v,b]$ is an $a-b$ path that contains $w$ it cannot be induced. Some vertex of $P[a,u] \setminus\{u\}$ or a vertex of $P[v,b] \setminus\{v\}$ must be adjacent with an internal vertex of $Q$; say the former occurs. Let $x$ be the first vertex of $P[a,u]$ that is adjacent with an internal vertex $y$ of $Q$. Let $r$ be the first vertex on $Q[y,v]$ that is adjacent with a vertex of $P[v,b]$ (possibly $r$ is $v_{k-1}$). Let $s$ be the last vertex of $P[v,b]$ adjacent with $r$. Then the path $H: P[a,x]xyQ[y,r]rsP[s,b]$ is an induced $a-b$ path and thus does not contain $w$. So $w$ is an internal vertex of $Q[u,y]$ or of $Q[r,v]$; suppose the former. Since $H$ is connected, $N[V(H)]$ is $m^3$-convex by Theorem \ref{DNBlocal}. Since $a,b \in N[V(H)]$ and as $P$ has length at least $3$, $N[V(H)]$ must contain every vertex of $P$. Thus $I_{m^3}[u,v] \subseteq N[V(H)]$. Hence $w$ is adjacent with a vertex of $H$. Since $w$ is adjacent with an internal vertex of $P[u,v]$, $w$ is not adjacent with any vertex of $P[a,x]$ nor $P[s,b]$. Since $Q$ is an induced path, the only vertex of $H$ to which $w$ can be adjacent is $y$. So $y$ follows $w$ on $Q$. Since $u$ and $y$ belong to $I_m[a,b]$ and as $Q[u,y]$ is an induced path containing $w$, it follows that $w$ must be adjacent with $u$; otherwise, we have a contradiction to our choice of $Q$. Let $x'$ be the last vertex on $P[x,u]$ to which $y$ is adjacent. Then $x'u \in E$; otherwise $P[x',u]uwyx'$ is an induced cycle of length at least $5$. Let $z$ be the first internal vertex of $P[u,v]$ to which $w$ is adjacent. (By an earlier observation $z$ exists.) Then $uz \in E$; otherwise, $w$ lies on an induced $a-b$ path. Also $yz \in E$; otherwise, $\langle \{x',u,w,y,z\} \rangle$ is a house. If $r \ne y$, let $y'$ be the neighbour of $y$ on $Q[y,r]$. Then $u,y' \in I_m[a,b]$ and $Q[u, y']$ is an induced path between two vertices of $I_m[a,b]$ having length $3$ and containing $w$, contrary to our choice of $Q$. So $r=y$. So $P[x', s]syx'$ is a cycle of length at least $5$. Since $yu \not\in E$, $x'uzyx'$ is an induced $4$-cycle. Let $z'$ be the first vertex after z on $P[z,s]$ to which $y$ is adjacent (perhaps $z' = s$). Then $P[z,z']z'yz$ is an induced cycle and hence has length $3$ or $4$. This cycle together with the $4$-cycles $x'yzux'$ produces either a house or a domino both of which are forbidden. So we may assume that $Q$ is an induced $u-v$ path between vertices $u$ and $v$ of $I_m[a,b]$ that do not belong to the same induced $a-b$ path. Indeed we may assume if $u$ and $v$ are any non-adjacent vertices that lie on the same induced $a-b$ path, then  $I_{m^3}[u,v] \subseteq I_m[a,b]$

\noindent{\bf Case 2}  Suppose $u$ and $v$ lie on two internally disjoint $a-b$ paths $P_u$ and $P_v$, respectively. We may assume $\{u,v \} \cap \{a,b \} = \emptyset$; otherwise, we are in Case 1.

\noindent{\em We show first that no internal vertex of $Q$ belongs to $P_u$ or $P_v$.} Suppose some internal vertex of $Q[u,w]$ or $Q[w,v]$, say $Q[u,w]$ belongs to $P_u$ or $P_v$. However, no internal vertex of $Q[u,w]$ belongs to $P_v$; otherwise, either the situation arises that was considered in Case 1 or there is an induced $a-b$ path containing $w$. So we may assume that an internal vertex of $Q[u,w]$ lies on $P_u$. Let $u'$ be the last such vertex. Then $Q[u',v]$ contains $w$ and is an induced path between two vertices of $I_m[a,b]$ that is shorter than $Q$. So $Q[u', v]$ has length $2$; otherwise we have a contradiction to our choice of $Q$. So $Q[u', v]$ must be the path $u'wv$. Since $Q$ has length at least $3$ and by our choice of $Q$ one of the neighbours of $u'$ on $P_u$ must be $u$. So one of the configurations shown in Fig. \ref{two configurations}  must occur where solid lines are edges and dashed lines represent subpaths of $P_u$ and $P_v$. We may assume that the configuration in (a) occurs. The argument for the configuration in (b) is similar.

\begin{figure}[htbp]
\center\scalebox{.60}{\includegraphics*{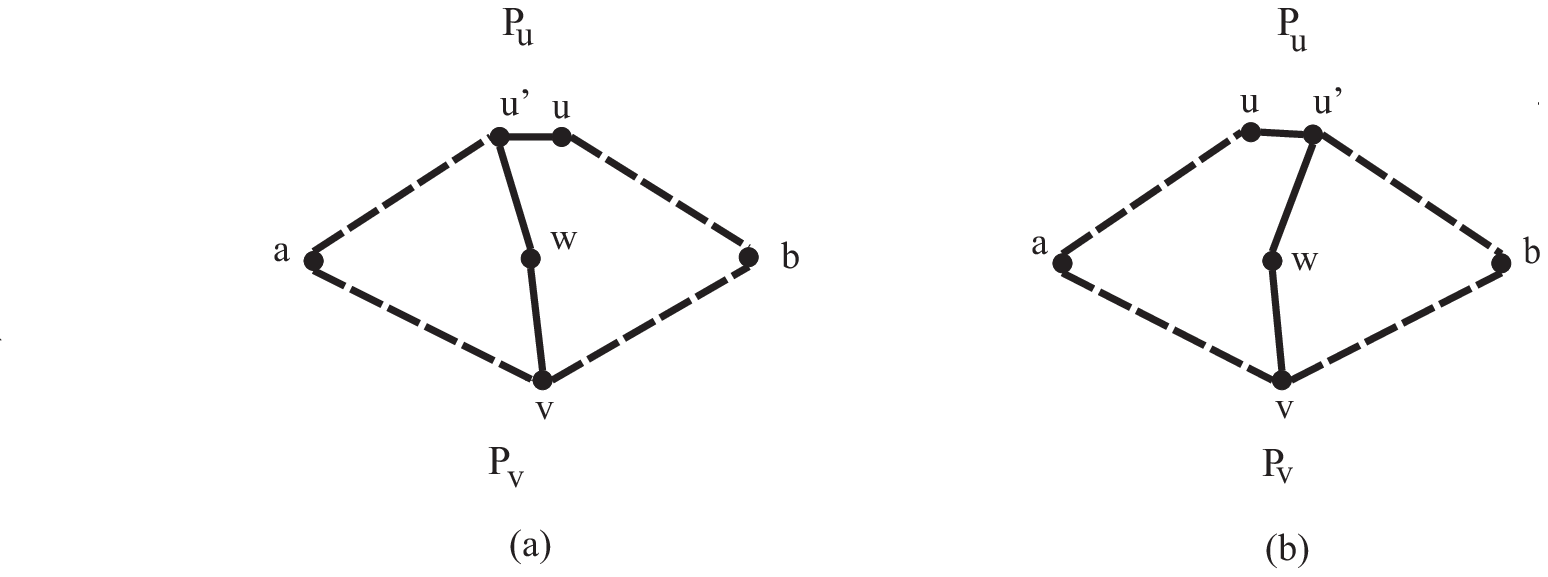}}
\caption{Two configurations in Case 2}
\label{two configurations}
\end{figure}

Since $Q$ is induced, $v$ is not adjacent to $u'$ or $u$ and $w$ is not adjacent with $u$. Let $v_L$ and $v_R$ be the neighbours of $v$ on $P_v[a,v]$ and $P_v[v,b]$, respectively. If $u'$ is adjacent with a vertex $r$ of $P_v[v_R, b] - v_R$ then $ru'wv$ is an induced path of length $3$ containing $w$ and whose end vertices lie on the same induced $a-b$ path. By Case 1, this situation cannot occur.  So the only vertex of $P_v[v_R, b]$ to which $u'$ can be adjacent is $v_R$. Similarly, the only vertex of $P_v[a, v_L]$ to which $u'$ can be adjacent is $v_L$. Using a similar argument and the fact that $vu \not\in E$, we see that $v$ is not adjacent with any vertex of $P_u[u,b]$. Moreover, $w$ is not adjacent with any vertex of $P_u[u,b]$; otherwise, $w$ lies on an induced $a-b$ path. The path obtained by taking $P_v[a,v]$ followed by $vwu'$ and then $P_u[u',b]$ is an $a-b$ path that contains $w$. Hence this path is not induced. {\em Suppose first that} $wv_L \notin E$. So some vertex of $P_v[a,v]$ is adjacent with some vertex of $P_u[u', b]$. Since $v$ is not adjacent with any vertex of $P_u[u', b]$, some vertex of $P_v[a,v_L]$ is adjacent with some vertex of $P_u[u', b]$. Let $z$ be a vertex closest to $v$ on $P_v[a,v]$ that is adjacent with a vertex of $P_u[u',b]$ and let $y$ be such a neighbour of $z$ closest to $u'$ on $P_u[u', b]$. Observe that $y=u'$ and $z=v_L$; otherwise, the cycle $P_v[z, v]vwu'P_u[u'y]yz$ is an induced cycle of length at least $5$. Let $x$ be the vertex closest to $u'$ on $P_u[u',b]$ that is adjacent with a vertex of $P_v[v, b]$ (possibly $x=b$). Let $x'$ be the neighbour of $x$ on $P_v[v,b]$ closest to $v$. By the above observation $x' \ne v$. The cycle $P_u[u',x]xx'P_v[x',v]vwu'$ is induced and has length at least $5$ unless $x=u'$ and  $x'=v_R$. So $u'$ is adjacent with both $v_L$ and $v_R$. Observe that $u$ is either adjacent with both $v_L$ and $v_R$ or neither of these two vertices; otherwise, $\langle\{v_L, v, v_R, u', u \} \rangle$ is a house. We show next that no vertex of $P_u[u,b]$ is adjacent with $v_L$. Suppose $r$ is a vertex on $P_u[u,b]$ closest to $u$ that is adjacent with $v_L$ First observe that $r\ne u$ for if $uv_L \in E$, then $\langle\{u,u',w,v,v_L\}\rangle$ is a house. So $r$ must be the neighbour of $u$ on $P_u[u,b]$; otherwise, $G$ has a hole. However then $\langle\{u',u,r,v_L,v,w\} \rangle$ is a domino. So $v_L$ is not adjacent with any vertex of $P_u[u,b]$. Let $C: v_Ru'P_u[u',b]P_v[b,v_R]$. Then $C$ is a cycle of length at least $5$ and hence has chords. Now $u'$ is not adjacent with any vertex of $P_v[v_R, b]$ other than $v_R$; otherwise, $w$ lies on an induced path of length $3$ between two vertices of $I_m[a,b]$ that belong to the same induced $a-b$ path, a case already dealt with. Since $uv_L \not\in E$, $uv_R \not\in E$. Suppose $u$ is adjacent with an internal vertex of $P_v[v_R,b]$. Let $s$ be such a vertex closest to $v_R$.  So $s \ne v_R$. Since $G$ contains no holes, $s$ is adjacent with $v_R$. But then $\langle \{ u',u,s,v_R,v,v_L \} \rangle$ is a domino. So the neighbour $r$ of $u$ on $P_u[u,b]$ is incident with a chord of $C$. Since $G$ has no holes $rv_R \in E$. But then $\langle\{u',u,r,v_R,v,v_L\} \rangle$ is a domino. {\em Suppose now that} $wv_L \in E$. Then $wv_R \not\in E$. Let $C':P_u[u', b]P_v[b,v]vwu'$. Then $C'$ is a cycle of length at least $5$ and hence has no chords. Since neither $w$ nor $v$ are incident with chords of $C'$, $u'v_R \in E$. If $uv_R \in E$ $\langle \{u,u',v_R,v,w \} \rangle$ is a house. Note that $u'$ is not adjacent with an internal vertex vertex of $P_v[v_R,b]$; otherwise, if $t$ is such a neighbour of $u'$, then $tu'wv$ is an induced path of length $3$ between two vertices of $I_m[a,b]$ that lie on the same induced $a-b$ path, a case already considered. Let $r$ be the neighbour of $u$ on $P_u[u,b]$ and $s$ the neighbour of $v_R$ on $P_v[v_R, b]$. Then either $v_Rr$ or $us$ is an edge; otherwise, $G$ has a hole. But then $\langle \{u',u,r, v_R, v, w \} \rangle$ or $\langle\{ u',u,s,v_R,v,w \} \rangle$ is a domino. So no internal vertex of $Q$ belongs to $P_u$ or to $P_v$.

Let $Q:(u=)v_1v_2 \ldots v_k(=v)$. Let $u_L$ and $u_R$ be the neighbours of $u$ on $P_u[a,u]$ and $P_u[u,b]$, respectively and $v_L$ and $v_R$ the neighbours of $v$ on $P_v[a,v]$ and $P_v[v, b]$, respectively. Let $S_1 = V(P_u[u_R, b]) \cup V(P_v[b, v_R])$ and $S_2 =V(P_u[a, u_L]) \cup V(P_v[a, v_L])$. Since $\langle S_i \rangle$ is connected for $i=1,2$, it follows from Theorem \ref{DNBlocal} that $N[S_i]$ is $m^3$-convex. Since $u,v \in N[S_i]$ for $i=1,2$, every vertex of $Q$ is adjacent with a vertex of $S_i$ for $i=1,2$. In particular $w$ is adjacent with a vertex of $S_i$ for $i=1,2$. However, $w$ is not adjacent with a pair of nonadjacent vertices of $P_u$ nor a pair of nonadjacent vertices of $P_v$. So without loss of generality we may assume that $w$ is adjacent with a vertex of $P_v[v_R, b]$ and a vertex of $P_u[a, u_L]$. Also $w$ is not adjacent with either $a$ or $b$; otherwise, $w$ lies on an induced $a-b$ path.

If $v_2$ is adjacent with two non-adjacent vertices of $P_u$ (or if $v_{k-1}$ is adjacent with two nonadjacent vertices of $P_v$), then $v_2 \ne w$ ( and $v_{k-1} \ne w$, respectively) and $Q[v_2, v]$ (or $Q[u, v_{k-1}]$, respectively) is an induced path between two vertices of $I_m[a, b]$ that is shorter than $Q$ and contains $w$. By our choice of $Q$ this can only happen if $Q$ has length $3$.

We consider two subcases that depend on the length of $Q$.\\
\noindent{\bf Subcase 2.1} Suppose $Q$ has length $3$. \\
Then $v_2$ or $v_3$ is $w$, say $v_3=w$. The case where $v_2 = w$ can be argued similarly. From the above, we may assume that $w$ is adjacent with an internal vertex of $P_v[v,b]$ and an internal vertex of $P_u[a,u]$. The only vertex of $P_v[v_R,b]$ that can be adjacent with $w$ is $v_R$; otherwise, $w$ lies on an induced $a-b$ path. So $wv_R \in E$. Now it follows that $w$ is not adjacent with a vertex of $P_v[a,v_L]$. Thus $\langle\{v_2,w,v,v_L,v_R \} \rangle$ is a house unless $v_2v_R \in E$. If $v_2v_R \in E(G)$, then $uv_L, ~ uv_R \not\in E$; otherwise, $\langle\{u, v_2, v_3, v, v_L \} \rangle$ or $\langle \{ u, v_2, v_3, v, v_R \} \rangle$ is a house. So $\langle \{ u, v_2, v_3, v_L, v, v_R \} \rangle$ is a tailed twin $C_4$ which is forbidden.  So this subcase cannot occur.

\noindent{\bf Subcase 2.2} Suppose $Q$ has length at least $4$. \\
By an earlier observation, $v_2$ is not adjacent with a pair of non-adjacent vertices of $P_u$ and $v_{k-1}$ is not adjacent with a pair of non-adjacent vertices of $P_v$. By assumption, $w$ is adjacent with an internal vertex of $P_u[a,u]$ and an internal vertex of $P_v[v,b]$. Suppose $w=v_j$. So $w$ is not adjacent with a vertex of $P_u[u_R,b]$ nor a vertex of $P_v[a,v_L]$.\\

\noindent{\bf Fact 1}  {\em No vertex of $Q[v_1, v_{j-1}]$ is adjacent with a vertex of $P_v[a,v_L]$ and no vertex of $Q[v_{j+1}, v_k]$ is adjacent with a vertex of $P_u[u_R, b]$.}\\
\noindent{\em Proof of Fact 1.}  Suppose some vertex of $Q[v_1, v_{j-1}]$ is adjacent with a vertex of $P_v[a, v_L]$. Let $i$ be the largest integer less than $j$ such that $v_i$ is adjacent with a vertex of $P_v[a, v_L]$. Let $z$ be a neighbour of $v_i$ on $P_v[a,v_L]$ closest to $v$ on this path. Then $C_1: Q[v_i,v]P_v[v,z]v_i$ is a cycle of length at least $4$. If $i \le j-2$, then $C_1$ has length at least $5$ and three consecutive vertices of $C_1$ are not incident with a chord of the cycle. This implies that $G$ has a hole; which is forbidden. So $i=j-1$. Clearly $j \le k-1$. Let $C_2: P_v[z,v]Q[v,v_{j+1}]z$. Then $C_2$ is a cycle of length at least $3$. Thus $\langle V(C_2) \rangle$ contains an induced cycle $C'$ of length at least $3$ that contains the edge $zv_{j+1}$. Since $G$ contains no holes, $C'$ has length $3$ or $4$. Since neither $v_j$ nor $v_{j-1}$ is adjacent with a vertex of $P_v[z,v]-z$ nor a vertex of $Q[v_{j+2}, v]$ and as $v_jz \notin E$, it is not difficult to see that the vertices of $C_2$ and $C'$ induce a house or a domino. So no vertex of $Q[v_1,v_{j-1}]$ is adjacent with a vertex of $P_v[a,v_L]$. By an identical argument we can show that no vertex of $Q[v_{j+1}, v_k]$ is adjacent with a vertex of $P_u[u_R, b]$. $\Box$\\

\noindent{\bf Fact 2} {\em No vertex of $P_v[a, v_L]$ is adjacent with any vertex of $P_u[u_R, b]$.}\\
\noindent{\em Proof of Fact 2.} Let $z$ be the first vertex of $P_v[a, v_L]$ that is adjacent with some vertex of $P_u[u_R, b]$. Let $y$ be a neighbour of $z$ on $P_u[u_R, b]$ that is closest to $b$. Then the path $P:P_v[a,z]zyP_u[y,b]$ is an induced $a-b$ path. So $N[V(P)]$ is $m^3$-convex and hence contains all induced $a-b$ paths of length at least $3$. Since $\{a,b\} \cap \{u,v\} =\emptyset$, and since both $P_u[a, u]$ and $P_v[v,b]$ contain an internal vertex adjacent with $w$, both $P_u$ and $P_v$ have length at least $3$. So $N[V(P)]$ contains all the vertices of $P_u$ and $P_v$ and hence $u$ and $v$. So $N[V(P)]$ also contains $Q$. Thus every vertex of $Q$ is adjacent with  a vertex of $P_v[a,z]$ or with a vertex of $P_u[y,b]$. But by assumption $w$ is adjacent with an internal vertex of both $P_u[a, u]$ and $P_v[v,b]$. So $w$ is adjacent with a pair of non-adjacent vertices of $P_v$ or a pair of non-adjacent vertices of  $P_u$, neither of which is possible. $\Box$ \\

From Facts $1$ and $2$, it follows that no vertex of the path $P_v[a,v] Q[v, v_{j-1}]$ is adjacent with a vertex of the path $Q[v_{j+1}, u]P_u[u,b]$. Hence the subgraph induced by the path $P_v[a,v]Q[v,u]P_u[u,b]$ is an induced $a-b$ path that contains $w$; contrary to the assumption that $w \not\in I_m[a, b]$. This completes the proof of Case 2.\\

\noindent{\bf Case 3}  Suppose that $u$ belongs to an induced $a-b$ path $P_u$ and $v$ to an induced $a-b$ path $P_v$ where $P_u$ and $P_v$ intersect at vertices other than $a$ and $b$. We may assume that $u$ and $v$ do not both belong to $P_u$ nor both to $P_v$; otherwise, Case 1 occurs. Let $a'$ be the last vertex prior to $u$ on $P_u[a, u]$ that is also a vertex of $P_v$ (perhaps $a'=a$). Let $b'$ be the first vertex after $u$ on $P_u[u,b]$ that belongs to $P_v$. So $a'b' \not\in E$. Let $a''$ be the last vertex prior to $v$ on $P_v[a,v]$ that also belongs to $P_u$ and $b''$ the first vertex after $v$ on $P_v[v, b]$ that also belongs to $P_u$. So $a''b'' \not\in E$.\\

\noindent{\bf Subcase 3.1}  Suppose $P_u[a'',b'']$ contains both $a'$ and $b'$. (Note $b''$ may precede $a''$ on $P_u[a'', b'']$.) In this case we can apply the argument used in Case 2 with $a$ and $b$ replaced by $a''$ and $b''$ and $P_u$ and $P_v$ replaced by $P_u[a'',b'']$ and $P_v[a'', b'']$. Hence this subcase cannot occur. \\

\noindent{\bf Subcase 3.2} Suppose $P_u[a'', b'']$ does not contain both $a'$ and $b'$. Then $a''$ and $b''$ either lie on $P_u[a,a']$ or on $P_u[b',b]$. We will assume the former case occurs. The arguments for the latter case are similar. We may assume $a''$ precedes $b''$ on $P_u[a,a']$. The case where $b''$ precedes $a''$ on $P_u[a,a']$ is similar. First suppose that $P_v[a'',b'']$ has length $2$. Then $v$ is the only interior vertex of $P_v[a'', b'']$ and $v$ is adjacent with two nonadjacent vertices of $P_u$. Let $x$ be the first vertex on $P_u$ that is adjacent with $v$, and $y$ the last vertex of $P_u$ adjacent with $v$. Since $uv \not\in E$, $y \ne u$. If $y$ precedes $u$ on $P_u$, then the path obtained by taking $P_u[a,x]$ followed by $xvy$ and then $P_u[y,b]$ is an induced $a-b$ path that contains both $u$ and $v$. Thus we can apply the argument used in Case 1 to this path to obtain a contradiction. If $y$ follows $u$ on $P_u$, then we can use the path $P_u[x,y]$ and the path $xvy$ and apply the argument used in Case 2 with $x$ and $y$ instead of $a$ and $b$, respectively.

We now assume that $P_v[a'',b'']$ has length at least $3$. Since $H =P_u[a'',b'']\setminus\{a'',b''\}$ is connected it follows, from Theorem \ref{DNBlocal}, that $N[V(H)]$ is $m^3$-convex. Since $N[V(H)]$ contains both $a''$ and $b''$ it must contain every internal vertex of $P_v[a'', b'']$. So each internal vertex of $P_v[a'', b'']$ is adjacent with an internal vertex of $P_u[a'', b'']$. If no internal vertex of $P_v[a'', b'']$ is adjacent with a vertex of $P_u[a,a''] \setminus\{a''\}$ or $P_u[b'', b] \setminus\{b''\}$, then we can replace $P_u[a'',b'']$ in $P_u$ with $P_v[a'', b'']$ to obtain an induced $a-b$ path that contains both $u$ and $v$. By applying the argument used in Case 1 to this path we obtain a contradiction. Let $b''_L$ and $b''_R$ be the neighbours of $b''$ that precede and succeed $b''$ on $P_u$. Let $x$ be the neighbour of $b''$ on $P_v[a'',b'']$.

Suppose first that some internal vertex  $t$ of $P_v[a'', b'']$ is adjacent with some vertex  $y$ of $P_u[b''_R, b]$.  If $t \ne x$, then $t$ is also adjacent with some internal vertex $z$ of $P_u[a'', b'']$. So $t \ne v$; otherwise, $v$ is adjacent with two nonadjacent vertices of $P_u$ which leads to a situation where the arguments of either Case 1 or Case 2 apply. If $P_u[z,y]$ has length at least $3$, then it follows, from Theorem \ref{DNBlocal}, that $t$ is adjacent with every vertex of $P_u[z,y]$ including $b''$; this is not possible as $t$ and $b''$ are nonadjacent vertices on the induced path $P_u[a'', b'']$. So $z = b''_L$ and $y = b''_R$ and $b''_L$ is the only vertex of $P_u[a'', b'']$ to which $t$ is adjacent. Suppose $P_v[t, b'']$ contains $v$. If $P_v[t, b'']$ contains at least four vertices, then the subgraph induced by $b''_R$ and the vertices of $P_v[t, b'']$ must contain a hole, house or domino. (We use the fact that $v$ cannot be adjacent to nonadjacent vertices of $P_u$; otherwise, one can again argue that Case 1 or Case 2 occurs.) Suppose now that $P_v[t, b''] = tvb''$. Let $d$ be the neighbour of $b_R''$ on $P_u[b''_R, b]$. Then $\langle\{ t,v,b'',b''_L, b''_R, d \} \rangle$ is a tailed twin $C_4$ since $v$ is nonadjacent with $b''_R$ and $d$.

 Suppose thus that $v$ does not belong to $P_v[t, b'']$. Then we may assume that $t$ is the first internal vertex on $P_v[a'', x]$ that is adjacent with $b''_R$. Let $s$ be the neighbour of $t$ on $P_v[a'', t]$. By the above we know that $tb''_L \in E$. If $sb''_L \in E$, then $\langle \{ s,t, b''_L, b'', b''_R\} \rangle$ is a house which is forbidden. So assume $sb''_L \not\in E$. Let $c$ be the neighbour of $b''_L$ on $P_u[a'', b''_L]$. Since $tc \not\in E$ and $G$ has no holes, $sc \in E$. But then $\langle \{s,c,t, b''_L, b'', b''_R \} \rangle$ is a domino, which is forbidden. So $x$ is the only internal vertex of $P_v[a'', b'']$ that is adjacent with vertices of $P_u[b''_R, b]$. Let $y$ be the neighbour of $a''$ on $P_v[a'', b'']$ and let $a''_L$ and $a''_R$ be the neighbours of $a''$ on $P_u[a, a'']$ and $P_u[a'',b'']$, respectively. One can argue as in the previous situation that the only internal vertex of $P_v[a'', b'']$ that is possibly adjacent with a vertex of $P_u[a, a'']$ is $y$.

  Now let $y'$ be the first vertex on $P_u[a, a'']$ that is adjacent with $y$ (possibly $y'=a''$) and let $x'$ be the last vertex on $P_u[b'', b]$ to which $x$ is adjacent (possibly $x'=b''$). If $x'$ belongs to $P_u[b'', u]$, then the path obtained by taking $P_u[a, y']$ followed by $y'yP_v[y, x]$ and then $xx'P_u[x',b]$ is an induced $a-b$ path containing both $u$ and $v$. By Case 1 this produces a contradiction. Suppose thus that $x'$ belongs to $P_u[u, b] -u$. Then $P_u[y', x']$ and $y'yP_v[y,x]xx'$ are two internally disjoint $y'-x'$ paths containing $u$ and $v$, respectively. By applying the arguments of Case 2 to these two paths we again obtain a contradiction. Hence Case 3 cannot occur either.

\end{proof}

\begin{lemma}\label{m-interval is m_3convex}
Suppose $G=(V, E)$ is a graph for which $(V, \cal{M}$$_{m_3^3}(G))$ is a convex geometry. Then for all $a, b \in V$, $I_{m_3}(I_m[a,b]) \subseteq I_m[a, b]$.
\end{lemma}
\begin{proof}
By the above $G$ is (house, hole, domino, $\cal{R}$$_{C_4}$, $\cal{T}$$_{C_4}$)-free. If $ab\in E$, then $I_m[a,b] = \{a, b\} = I_{m^3}(\{a,b\})= I_{m^3}(I_m[a,b])$. Suppose $ab \not\in E$. So, by Lemma \ref{m-interval is m^3convex}, $I_{m^3}(I_m[a,b]) \subseteq I_m[a,b]$ (in fact equality holds). If $I_{m_3}(I_m[a,b]) \not\subseteq I_m[a, b]$, then there is a set $W =\{w_1, w_2, w_3\} \subseteq I_m[a,b]$ such that $I_{m_3}(W) \not\subseteq I_m[a,b]$. So there is an minimal $W$-tree $T$ that contains a vertex $x \not\in I_m[a, b]$. Let $H = \langle V(T) \rangle$. Then $x$ is a cut-vertex of $H$. Thus one of the vertices of $W$, say $w_3$ does not belong to the component of $H-x$ that contains $w_1$ nor the component containing $w_2$. So $x$ lies on an induced $w_3-w_i$ path for $i=1,2$. Since, by Lemma \ref{m-interval is m^3convex}, $I_m[a,b]$ is $m^3$-convex it must be the case that $x$ is adjacent with $w_1, w_2$ and $w_3$; otherwise, $x \in I_m[a,b]$. So $x$ is on an induced path between every pair of nonadjacent vertices of $W$.\\
\noindent{\bf Case 1} Suppose two nonadjacent vertices of $W$ lie on the same induced $a-b$ path $P$. Then $x$ is adjacent with a pair of nonadjacent vertices of an induced $a-b$ path. Hence $x$ lies on an induced $a-b$ path; contrary to assumption. So $w_1, w_2$ and $w_3$ cannot lie on the same induced $a-b$ path.\\

\noindent{\bf Case 2} Suppose that two adjacent vertices of $W$, say $w_1$ and $w_2$, lie on an induced $a-b$ path $P$. By Case 1, $w_3$ does not lie on the same induced $a-b$ path as $w_1$ and $w_2$. Let $Q$ be an induced $a-b$ path containing $w_3$. Let $s_3$ and $t_3$ be the neighbours of $w_3$ on $Q[a,w_3]$ and $Q[w_3, b]$, respectively. (Note that $w_3 \ne a$ or $b$; otherwise, the vertices of $W$ lie on the same induced $a-b$ path. So $s_3$ and $t_3$ are well-defined.) Since $w_1w_2 \in E$, $w_1w_3, w_2w_3 \not\in E$. Hence $\{s_3, t_3 \} \cap\{w_1,w_2\} = \emptyset$. Since $x$ cannot be adjacent with two nonadjacent vertices of $Q$, $x$ cannot be adjacent with both $s_3$ and $t_3$. We may assumer $xt_3 \not\in E$. The path $R:w_2xw_3t_3$ is a path of length $3$ between two vertices of $I_m[a,b]$. By Lemma \ref{m-interval is m^3convex}, $I_m[a,b]$ is $m^3$-convex. If $R$ is induced this would imply that $x \in I_m[a,b]$, contrary to assumption.  Hence $w_2t_3 \in E$. Now $\langle \{w_1, w_2, x, w_3, t_3\} \rangle$ is a house unless $w_1t_3 \in E$.

If $xs_3 \not\in E$, then we can argue as for $t_3$ that $s_3w_1, s_3w_2 \in E$. But then $\langle \{ w_1,w_2,w_3,x, s_3, t_3 \} \rangle$ is a replicated twin $C_4$ which is forbidden.

Suppose now that $xs_3 \in E$. Then $\langle \{s_3,w_3,t_3, w_2, x\} \rangle$ is a house unless $s_3w_2 \in E$. If $w_1s_3 \not\in E$, the path $R:s_3xw_1t_3$ is an induced path, of length $3$, between two vertices in $I_m[a,b]$ that contains $x$. Since $I_m[a,b]$ is $m^3$-convex and $R$ contains $x$ this contradicts our assumption about $x$. So $w_1s_3 \in E$. However, then $\langle\{w_1, w_2, w_3, x, s_3, t_3 \} \rangle$ is again a replicated twin $C_4$ which is forbidden. So this case cannot occur.\\

\noindent{\bf Case 3} Suppose that no two vertices of $W$ lie on the same induced $a-b$ path in $G$. (We may also assume that $w_1w_3, w_2w_3 \not\in E$.) Let $P_i$ be an induced $a-b$ path containing $w_i$ for $i= 1,2,3$. From the case we are in $w_i$ is not equal to either $a$ or $b$ for $i=1,2,3$. For $i=1,2,3$, let $s_i$ and $t_i$ be the neighbours of $w_i$ on $P_i[a, w_i]$ and $P_i[w_i,b]$, respectively.\\

\noindent{\bf Subcase 3.1} $\{s_1,t_1\} = \{s_2, t_2\} = \{s_3, t_3\}$. Since $s_1$ and $t_1$ are non-adjacent vertices of $P_1$, $x$ is adjacent with at most one of $s_1$ or $t_1$. Hence $\langle \{ w_1, w_2, w_3, s_1, t_1, x \} \rangle$ is a replicated twin $C_4$ which is forbidden. So $\{s_3, t_3\}$ is either not equal to $\{s_1, t_1\}$ or $\{s_2, t_2\}$; suppose the former.\\

\noindent{\bf Subcase 3.2} $\{s_1, t_1\} \cap \{s_3, t_3 \} = \emptyset$. Since $s_i$ and $t_i$ are non-adjacent vertices of $P_i$, $x$ cannot be adjacent with both $s_i$ and $t_i$ for $i = 1, 2, 3$. So we may assume $xt_1 \not\in E$. Suppose first that $xt_3 \not\in E$. Since $t_1w_1xw_3$ is a path of length $3$ between two vertices of $I_m[a, b]$ that contains $x$, it follows from Lemma \ref{m-interval is m^3convex} that this is not an induced path. Hence $w_3t_1 \in E$. Similarly by considering the path $w_1xw_3t_3$ and using the same argument it follows that $w_1t_3 \in E$. Similarly by considering the paths $w_2xw_3t_1$ and $w_2xw_3t_3$, it follows that $w_2t_1$ and $w_2t_3 \in E$. But now $\langle\{w_1, w_2, w_3, t_1, t_2, x \} \rangle$ is a replicated twin $C_4$ which is forbidden. So this case cannot occur.\\

\noindent{\bf Subcase 3.3} $|\{s_1, t_1\} \cap \{s_3,t_3\}| = 1$. We may assume $s_1 \in \{s_3, t_3\}$. The case where $t_1 \in \{s_3, t_3 \}$ can be argued similarly. Suppose first that $s_1 = s_3$. If $s_1x \in E$, then $xt_1, xt_3 \not\in E$. But then we can argue similarly as in Subcase 3.2 that $\langle \{ w_1, w_2, w_3, t_1, t_3, x \} \rangle$ is a replicated twin $C_4$. Hence $s_1x \not\in E$.  Suppose at least one of $xt_1$ or $xt_3$ is in $E$, say $xt_1 \in E$. Then $\langle \{s_1,w_1, t_1, x, w_3\} \rangle$ is a house unless $t_1w_3 \in E$. By considering the path $w_2xw_3s_1$ we can argue as before that $w_2s_1 \in E$. By now considering the path $t_1xw_2s_1$ it follows that $t_1w_2 \in E$. Thus $\langle \{w_1, w_2, w_3, s_1, t_1, x\} \rangle$ is a replicated twin $C_4$ which is forbidden. If neither $xt_1$ nor $xt_3$ are in $E$, then one can argue in a similar manner that $\langle \{ w_1, w_2, w_3, s_1, x, t_3 \} \rangle$ is a replicated twin $C_4$. If $s_1 = t_3$ we can argue similarly that $G$ contains a replicated twin $C_4$ which is forbidden. Hence this case cannot occur either. This completes the proof of the lemma.
\end{proof}

\begin{lemma} \label{monophonic interval of leaves of A graph}
If $G=(V,E)$ is a (house, hole, domino, $\cal{T}$$_{C_4}$)-free graph that contains an induced $A$-graph as labeled in Fig.4, then $u_2 \not\in I_m[a,b]$.
\end{lemma}
\begin{proof}

\begin{figure}[htbp]
\label{Agraph}
\center\scalebox{.60}{\includegraphics*{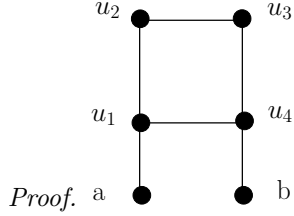}}
\caption{A labeled $A$-graph}
\end{figure}
Suppose, to the contrary, that $u_2 \in I_m[a,b]$ and let $P$ be an induced $a-b$ path containing $u_2$.\\

\noindent{\bf Case 1} $u_1 \not\in V(P[a, u_2])$. Suppose $P[a, u_2] : aw_1w_2 \ldots w_ku_2$. If $k=1$, then $\langle \{ a,w_1,u_2,u_1,u_4, u_3 \} \rangle $ is a domino unless at least one of $w_1u_4, w_1u_3,w_1u_1 \in E$.
If $w_1u_3 \not\in E$, then $w_1u_1$ or $w_1u_4 \in E$. Suppose $w_1u_1 \in E$. Then $\langle \{ w_1, u_2, u_3,$ $ u_1, u_4 \} \rangle$ is a house unless $w_1u_4 \in E$. So in either case $w_1u_4 \in E$. But then $\langle \{u_2,$ $w_1,u_1, u_3, u_4, b\} \rangle$ is a tailed twin $C_4$ which is forbidden. So $w_1u_3 \in E$. Since $\langle \{w_1,a,u_1, u_4, u_3 \} \rangle$ is not a hole, either $w_1u_1$ or $w_1u_4$ is in $E$. If $w_1u_4 \not\in E$, then $\langle \{w_1,a,u_1, u_4, u_3 \} \rangle$ is a house which is forbidden. Hence $w_1u_4 \in E$. So if $P[a, u_2]$ has length $2$, then its interior vertex is adjacent with both $u_3$ and $u_4$.

Suppose now that $k \ge 2$. By Theorem \ref{DNBlocal}, $N[u_1]$ is $m^3$-convex. Since $N[u_1]$ contains both $a$ and $u_2$, every vertex of $P[a,u_2]$ is adjacent with $u_1$. However, then $\langle \{w_k, u_1, u_2, u_3, u_4 \} \rangle$ is a house unless $w_ku_3$ or $w_ku_4$ is in $E$. If $w_ku_3 \not\in E$, then $w_ku_4 \in E$ and so $\langle\{u_4,u_1,u_3, w_k, u_2, b\} \rangle$ is a tailed twin $C_4$ which is forbidden. If $w_ku_3 \in E$ and $w_ku_4 \not\in E$, then $\langle \{ u_1,w_k,u_2,u_3,u_4, a \} \rangle$ is a tailed twin $C_4$ which is forbidden. Hence $w_ku_3, w_ku_4 \in E$.

Thus neither $u_3$ nor $u_4$ belongs to $P[u_2,b]$.

Suppose first that $P[u_2, b]$ has length $2$. Let $v_1$ be its interior vertex. By Theorem \ref{DNBlocal}, $N[v_1]$ is $m^3$-convex. Since $N[v_1]$ contains both $u_2$ and $b$, $v_1$ is adjacent with every vertex on every induced $u_2-b$ path of length at least $3$. So $v_1$ is adjacent with $u_3$ and $u_4$. But now $\langle\{ w_k, u_2, v_1, u_4, b\} \rangle$ is a house which is forbidden.

Suppose now that $P[u_2, b]$ has length at least $3$, say $P[u_2, b]: u_2v_1v_2 \ldots v_rb$. By Theorem \ref{DNBlocal}, $N[\{u_3,u_4\}]$ is $m^3$-convex. Since $u_2, b \in N[\{u_3,u_4\}]$, every vertex of $P[u_2, b]$ is adjacent with either $u_3$ or $u_4$. Let $b=v_{r+1}$. Let $i$ be the smallest integer such that $v_iu_4 \in E$, possibly $i=r+1$. Then $w_ku_2v_1 \ldots v_i u_4 w_k$ is an induced cycle. Since $G$ has no holes $i=1$. Let $j$ be the smallest integer greater than $1$ such that $v_ju_4 \in E$; possibly $j =r+1$. If $j=2$, then $\langle\{ w_k, u_2, v_1, v_2, u_4 \} \rangle$ is a house which is forbidden. Thus $j=3$; otherwise, $u_4 v_1 v_2 \ldots v_ju_4$ is an induced cycle of length at least $5$; which is forbidden. But then $\langle \{ w_k,u_2,v_1,v_2,v_3,u_4 \} \rangle$ is a domino which is again forbidden.

\noindent{\bf Case 2} $u_1 \in V(P[a, u_2])$. By considering $P[u_2, b]$ one can argue as in the previous case that $G$ contains a forbidden subgraph. Hence the lemma follows.
\end{proof}

We now complete the proof of the theorem.
By the above $G$ is (house, hole, domino, $\cal{R}$$_{C_4}$, $\cal{T}$$_{C_4}$)-free.  Suppose $G$ contains the $A$ graph as an induced subgraph. Then the collection of extreme vertices for the convex hull, $CH(A)$, of the $A$ graph is a subset of the set of two leaves of the $A$ graph. By Lemma \ref{monophonic interval of leaves of A graph} the monophonic interval of the leaves of the $A$ graph does not include all the vertices of the $A$-graph. By Lemmas \ref{m-interval is m^3convex} and \ref{m-interval is m_3convex}, $I_m[a,b]$ is $m^3_3$-convex for all $a, b \in V$. This is true in particular for the two leaves of the $A$ graph. Hence the convex hull of the extreme vertices of $CH(A)$ is thus not equal to $CH(A)$. This contradicts the fact that $(V, \cal{M}$$_{m^3_3}(G))$ is a convex geometry.
\end{proof}

\end{document}